\newcommand{\inv}{{-1}}
\newcommand{\mbbz}{\mathbb{Z}}
\newcommand{\mbbn}{\mathbb{N}}
\newcommand{\mcala}{\mathcal{A}}
\newcommand{\eqd}{\stackrel{d}{=}}
\DeclareMathOperator{\prob}{\mathbb{P}}
\DeclareMathOperator{\exptn}{\mathbb{E}}
\DeclareMathOperator{\card}{Card}
\newcommand{\beq}{\begin{equation}}
\newcommand{\eeq}{\end{equation}}
\newcommand{\alns}[1]{\begin{align*}#1\end{align*}}
\newcommand{\aln}[1]{\begin{align} #1 \end{align}}
\newcommand{\been}{\begin{enumerate}}
\newcommand{\een}{\end{enumerate}}
\newcommand{\textred}[1]{\textcolor{black}{#1}}
\newcommand{\bbz}{\mathbb{Z}}
\newcommand{\cala}{\mathcal{A}}
\newcommand{\bbn}{\mathbb{N}}
\newtheorem{thm}{Theorem}[section]
\newtheorem{propn}[thm]{Proposition}
\theoremstyle{remark}
\newtheorem{remark}[thm]{Remark}
\newtheorem{cor}[thm]{Corollary}
\theoremstyle{definition}
\numberwithin{equation}{section}
\title{Slower variation of the generation sizes induced by heavy-tailed environment for geometric branching}
\date{\today}
\author[1]{Ayan Bhattacharya}
\author[2]{Zbigniew Palmowski}
\affil[1]{Centrum Wiskunde \& Informatica, Amsterdam, The Netherlands}
\affil[2]{Faculty of Pure and Applied Mathematics, Wroc\l aw University of Science and Technology, Wroc\l aw, Poland}
\begin{document}

\maketitle

\begin{abstract}
Motivated by seminal paper of Kozlov et al. \cite{kesten:kozlov:spitzer:1975} we consider in this paper a
branching process with a geometric offspring distribution parametrized by random success probability $A$ and immigration equals $1$ in each
generation. In contrast to above mentioned article, we assume that environment is heavy-tailed, that is $\log A^\inv (1-A)$ is regularly varying with a
parameter $\alpha>1$, that is that $\prob \Big( \log A^\inv (1-A) > x \Big) = x^{-\alpha} L(x)$ for a slowly varying function $L$.
We will prove that although the offspring distribution is light-tailed, the environment itself can produce
extremely heavy tails of distribution of the population at $n$-th generation which gets even heavier with $n$ increasing.
Precisely, in this work, we prove that asymptotic tail $\prob(Z_l \ge m)$ of $l$-th population $Z_l$ is of order $
\Big(\log^{(l)} m \Big)^{-\alpha} L \Big(\log^{(l)} m \Big)$
for large $m$, where $\log^{(l)} m = \log \ldots \log m$.
The proof is mainly based on Tauberian theorem.
Using this result we also analyze the asymptotic behaviour of the first passage time $T_n$ of the state $n \in \mbbz$ by the walker
in a neighborhood random walk in random environment created by  independent copies $(A_i : i \in \bbz)$ of  $(0,1)$-valued random variable  $A$.
\end{abstract}

\noindent{\bf{Key words and phrases.}} {branching process, random environment, random walk in random environment, regular variation, slow variation.}\\

\noindent{\bf{2010 Mathematics Subject Classification.}} Primary 60J70, 60G55; Secondary 60J80.

\section{Introduction and main results}


We consider branching process appeared in \cite{kesten:kozlov:spitzer:1975} to study limit theorems for hitting times associated to the Random Walk in Random Environment (RWRE). We describe briefly the model RWRE and the associated geometric Branching Process in Random Environment (BPRE). Consider a collection $(A_i : i \in \bbz)$ of i.i.d. (independently and identically distributed) $(0,1)$-valued random variables. Let $\cala$ be the natural $\sigma$-field associated to the collection $(A_i : i \in \bbz)$. Let $(X_k : k \in \bbn)$ be  a collection of $\bbz$-valued random variables such that, $X_0 =0$
\alns{
\prob \big( X_{k+1} = X_k + 1 \big| \cala, X_0 = i_0, \ldots, X_k = i_k \big) = A_{i_k} = 1- \prob \big( X_{k+1} = X_k -1 \big| \cala, X_0 = i_0, \ldots, X_k = i_k  \big)
}
for all $i_j \in \bbz$, $1 \le j \le k$ and  $k  \ge 1$. The collection $(A_i : i \in \bbz)$ is called the random environment.
For this random walk \cite{kesten:kozlov:spitzer:1975}
studied asymptotic distribution (after appropriate normalization) of a sequence of  hitting times
$T_n= \inf\{k > 0: X_k =n\}$ of the state $n \in \mbbz$ by the walker in the random environment.  Following the arguments \textred{(see after Remark~3 in page~148 of \cite{kesten:kozlov:spitzer:1975})} given in the aforementioned work, we have
\begin{equation}
T_n = n + 2 \sum_{i = -\infty}^{\infty} U_i^{(n)}\label{mainrepr}
\end{equation}
where $U_i^{(n)} := \card \{ k < T_n : X_k = i, X_{k+1}= i-1\}$ denotes the number of times moved left being at state $\{i \}$ with $\card(K)$ is the cardinality of the set $K$. Under the following assumptions (see assumption (1.2) in \cite{kesten:kozlov:spitzer:1975}) on the environment
\aln{
\exptn \bigg( \log \frac{1-A}{A} \bigg) < 0 ~~~~~~ \mbox{but} ~~~~~~ \exptn \Big( \frac{1-A}{A} \Big) \ge 1,  \label{eq:assumption:solomon}
}
it follows that $X_k \to \infty$ almost surely as $k \to \infty$. So $\sum_{i= - \infty}^{0} U_i^{(n)}$ is finite almost surely and can be ignored in asymptotic analysis of $T_n$. It is also easy to observe that $\sum_{i = n+1}^\infty U_i^{(n)} = 0$ almost surely as \textred{the walker can not reach to $i$ before hitting $n$ for all $i \ge n+1$.} Thus the asymptotic behavior of $T_n$ is solely determined by the asymptotic behavior of $\sum_{i = 1}^n U_i^{(n)}$. The following observation
\aln{
\sum_{i=1}^n U_i^{(n)} \eqd \sum_{l=0}^{n-1} Z_l.\label{disteq}
}
has been used in \cite{kesten:kozlov:spitzer:1975} to derive the asymptotics of $\sum_{i=1}^{n-1} U_i^{(n)}$, where $Z_n$ denotes the size of the $n$-th generation of a BPRE with one immigrant in each generation. The BPRE is constructed in such a way that
\aln{
Z_n = \sum_{i=1}^{Z_{n-1} + 1} B_{n,i},\label{Zn}
}
where  $(B_{n,i} :  i \ge 1)$ are independent copies of the geometric
random variable $B_n$ such that
\aln{
\prob (B_n = k) = A_{n-1} \Big( 1- A_{n-1} \Big)^k \mbox{ for all } k \ge 0,~ n\ge 1 \label{eq:prob:mass:func:Bn}
}
conditioned on $\mcala$. \cite{kesten:kozlov:spitzer:1975} derived central limit theorem for $n^{-1/\kappa}T_n$ \textred{if there exists a  $\kappa >0$} such that 
\aln{
\exptn \bigg( \exp \bigg\{ \kappa \log \frac{1-A}{A} \bigg\} \bigg) =1. \label{eq:assumption:kks}
}
Note that the assumption in \eqref{eq:assumption:kks} implies that the random variable $\log A^\inv (1-A)$ has an exponentially decaying right tail. Under above assumptions, after appropriate scaling, $T_n$ has the same asymptotical tail like scaled \textred{$\sum_{l=0}^{n-1} Z_l$}
and  converges to a $\kappa$-stable random variable if $\kappa \in (0,2)$ and Gaussian random variable if $\kappa \ge 2$ \textred{(see main result in \cite{kesten:kozlov:spitzer:1975})}.

%

The aim of this article is to study the asymptotic behavior of branching process $Z_n$ under the assumption that $\log A^\inv (1-A)$ has a regularly varying (instead of exponentially decaying) tail. We assume then that
\aln{
\prob \Big( \log A^\inv (1-A) < x \Big) =  \begin{cases} 1- x^{-\alpha}  L(x) & \mbox{ if } x >\eta \\ G(x) & \mbox{ if } x < \eta \end{cases} \label{assregvar}
}
 for some $\eta > 0$ and $\alpha >1$  where $L(\cdot)$ is a slowly varying function i.e. $\lim_{x \to \infty} L(tx)/ L(x) = 1$. We assume that $G$ is chosen in such a way that \eqref{eq:assumption:solomon} holds. \textred{Note that $\log A^\inv (1-A)$ is a real-valued random variable. In \eqref{assregvar}, we only put restrictions on the right-tail of the distribution of $\log A^\inv (1-A)$ but did not assume anything about the left-tail. It is clear that the probability of the walker moving to right is small if the value of $A$ is close to $0$ which causes large values of $T_n$. It is clear that $\log(a^{-1}(1-a))$ is a decreasing function of $a \in (0,1)$. So the tail behavior of $A$ near $0$ is same as  the tail behavior of $\log A^{-1}(1- A)$ near infinity. So large values of $\log A^\inv (1 - A)$ causes large values of $T_n$. As we are interested in the probability of large values of $T_n$, right-tail of the random variable $\log A^\inv (1-A)$ only matters.  Note that \eqref{eq:assumption:solomon} implies that the BPRE under consideration is subcritical without immigrant and hence becomes extinct eventually for almost all environments. The formula  for $T_n$ involves the first $n$ generations of the subcritical BPRE. So there is a positive probability that the extinction of BPRE may happen before generation $n$. The immigration is important for survival of the tree  till generation $n$. But it does not contribute too much the large values of $T_n$ as it is constant through out all the generations. There is another interpretation of the immigrant. Note that we are considering here nearest-neighbour random walk on ${\mathbb Z}$ and so the walker has to spend atleast one unit of time at each state $i$ before hitting $n$ for all $i = 0,1, 2, \ldots, n-1$. Hence one immigrant in each generation appears in the description (see \eqref{Zn}) of the BPRE.}



 Following \cite{vatutin:dyakonova:sagitov:2013}, if there exists $\beta >0$, \textred{given by} the following equation
\begin{align*}
\exptn \Big[ \exp \Big\{ \beta \log \exptn(Z_1| \mathcal{A}) \Big\} \log \exptn(Z_1| \mathcal{A}) \Big] = 0
\end{align*}
which becomes
$$ \exptn \Big[ \exp \Big\{ \beta \log \frac{1-A}{A} \Big\} \log \frac{1-A}{A} \Big] = 0 $$
in our case; then the asymptotic behavior of BPRE crucially depends on the parameter $\beta$. \textred{ The parameter $\beta$  may not exist always. We would like to underline the fact that in our case, the right-tail of $\log A^{-1}(1 - A)$ is regularly varying stated in \eqref{assregvar} and so there does not exist any $\beta>0$ such that 
$$\exptn \Big[ \Big|\exp \Big\{ \beta \log \frac{1-A}{A} \Big\} \log \frac{1-A}{A} \Big| \Big] < \infty.$$ }
 We are interested in the annealed behavior of the generation sizes $(Z_n : n \ge 1)$ of the BPRE in this paper. Our first result Theorem~\ref{thm:tail:first:generation:size} shows that $\prob(Z_1 \ge m) \sim (\log m)^{-\alpha} L(\log m)$ and hence have slowly varying tail. It is clear that $\exptn(Z_1) = \infty$. We would also like to stress the fact that this behavior is not totally unexpected. Note that the tail behavior of $n^{- 1/ \kappa} T_n$ is regularly varying if $\kappa \in (0,2)$ as the limit is  stable random variable under the assumption stated in \eqref{eq:assumption:kks}. So it is natural to guess that $Z_1$ have slowly varying tail under the assumption \eqref{assregvar} though the form of the slowly varying function is far from being obvious. We derive exact form of the slowly varying function in Theorem~\ref{thm:tail:first:generation:size} for $Z_1$ and Theorem~\ref{thm:tail:general:generation:size} for $Z_l$ with $l \ge 2$.  These results are used finally to derive the asymptotics for $T_n$ in Theorem~\ref{thm:tail:Tn}. To the best of our knowledge, this kind of example in BPRE is missing in the literature where generation sizes have exponentially decaying tail given the environment but have slowly varying tail after averaging out the effect of random environment.
As a consequence of slowly varying tail of $Z_1$, it is easy to guess that the annealed behavior of generation sizes is very similar to a GW tree with infinite mean. Branching process with infinite mean is well-studied in literature and a brief review indicating contribution of this article in that literature is given after stating main results of this paper.

\begin{thm} \label{thm:tail:first:generation:size}
Under the assumptions \eqref{eq:prob:mass:func:Bn}, \eqref{eq:assumption:solomon} and \eqref{assregvar},
\alns{
\lim_{m \to \infty} \frac{\prob(Z_1 > m)}{(\log m)^{-\alpha} L(\log m)} =1.
}
\end{thm}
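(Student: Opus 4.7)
My plan is to identify $\prob(Z_1 \ge m)$ with the Laplace--Stieltjes transform of a non-negative random variable and then apply a Karamata-style Tauberian theorem. By the branching recursion \eqref{Zn}, conditionally on the environment $\mathcal{A}$ the first generation $Z_1$ is a small sum of geometric$(A_0)$ variables, and \eqref{eq:prob:mass:func:Bn} gives $\prob(B_1 \ge m \mid A_0) = (1-A_0)^m$. Absorbing the lower-order polynomial-in-$m$ correction coming from the immigrant (which is straightforward to check on the Laplace transform), we arrive at
\[
\prob(Z_1 \ge m) \;\sim\; \exptn\!\left[(1-A_0)^m\right] \;=\; \exptn\!\left[e^{-m V}\right], \qquad V := -\log(1-A_0) \ge 0,
\]
i.e.\ the Laplace--Stieltjes transform of the law of $V$ evaluated at $m$.

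\textbf{Small-value behaviour of $V$.} I next transfer the right-tail hypothesis \eqref{assregvar} on $Y := \log\frac{1-A_0}{A_0}$ to the small-value behaviour of $V$. Inverting $1-A_0 = e^{-V}$ gives $Y = -V - \log(1-e^{-V}) = \log(1/V) + o(1)$ as $V \to 0^+$. Consequently, setting $y(v) := -v - \log(1-e^{-v}) \sim \log(1/v)$,
\[
F_V(v) := \prob(V \le v) \;=\; \prob\!\big(Y \ge y(v)\big) \;\sim\; \tilde\ell(1/v) \qquad \text{as } v \to 0^+,
\]
where $\tilde\ell(x) := (\log x)^{-\alpha} L(\log x)$ is slowly varying at infinity (log-rescaling preserves slow variation, and the $\sim$ above uses slow variation of $L$ to replace $y(v)$ by $\log(1/v)$).

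\textbf{Tauberian conclusion and main obstacle.} Karamata's Tauberian theorem in the form linking small-$v$ behaviour of a distribution function to large-$s$ behaviour of its Laplace--Stieltjes transform (the index-zero, slowly varying case) now yields
\[
\exptn\!\left[e^{-m V}\right] \;\sim\; \tilde\ell(m) \;=\; (\log m)^{-\alpha}\, L(\log m) \qquad \text{as } m \to \infty,
\]
which is the claim. Equivalently, integration by parts together with the substitution $u = mv$ give $\exptn[e^{-mV}] = \int_0^\infty e^{-u} F_V(u/m)\, du$, and $F_V(u/m)/\tilde\ell(m) \to 1$ pointwise in $u$. The main technical difficulty will be justifying the passage to the limit inside this integral, since the ratio is not bounded uniformly in $u$. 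I plan to split the domain: Potter's bounds for the slowly varying $\tilde\ell$ furnish an integrable majorant of the form $C\, e^{-u}(u^{-\epsilon} + u^{\epsilon})$ on the region $u \in (0, m/x_0)$ for any $\epsilon \in (0,1)$, whereas on $u \ge m/x_0$ the trivial bound $F_V \le 1$ together with the fast decay of $e^{-u}$ (e.g.\ with cutoff $T_m = 2\alpha \log\log m$) makes the contribution $o(\tilde\ell(m))$.
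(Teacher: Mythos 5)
Your proposal is correct and follows essentially the same route as the paper: both reduce $\prob(Z_1 \ge m) = \exptn[(1-A_0)^m]$ to the Laplace--Stieltjes transform of $V = -\log(1-A_0)$ (the paper's two substitutions $y = \log\frac{1-a}{a}$ and $e^u = 1 + e^{-y}$ land on the same variable $u = -\log(1-a) = V$), identify the slowly varying small-$v$ behaviour of $F_V$ from \eqref{assregvar}, and conclude by a Karamata--Tauberian argument (the paper cites Theorem~1.7.1$'$ of Bingham--Goldie--Teugels; you propose to re-derive that index-zero special case by Potter bounds and dominated convergence). One small point: with $Z_0 = 0$ the generation $Z_1 = B_{1,1}$ is a \emph{single} geometric, so $\prob(Z_1 \ge m) = \exptn[(1-A_0)^m]$ holds with equality and no ``immigrant correction'' needs absorbing.
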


\begin{thm} \label{thm:tail:general:generation:size}
Under the assumptions \eqref{eq:prob:mass:func:Bn}, \eqref{eq:assumption:solomon} and \eqref{assregvar},
\aln{
\lim_{m \to \infty} \frac{\prob (Z_l > m)}{(\log^{(l)} m)^{-\alpha} L(\log^{(l)}m)} = \alpha^{-\alpha} \label{eq:tail:asymp:generation:size}
}
 for $l \ge 2$ where $\log^{(l)} m = \underbrace{\log \ldots \log}_{l~~many} m$.
\end{thm}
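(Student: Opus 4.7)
I proceed by induction on $l$, with the base case $l=1$ given by Theorem~\ref{thm:tail:first:generation:size}, and follow the Tauberian strategy indicated in the abstract.

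Let $P_l(r) := \exptn[r^{Z_l}]$ and $H_a(r) := a/(1-(1-a)r)$. Conditioning on $A_{l-1}$ (which is independent of $Z_{l-1}$) and using that $Z_l \mid A_{l-1}, Z_{l-1}$ is a sum of $Z_{l-1}+1$ i.i.d.\ $\mathrm{Geom}(A_{l-1})$ variables with PGF $H_{A_{l-1}}$, one obtains the recursion
$$P_l(r) = \exptn\!\bigl[H_A(r)\, P_{l-1}(H_A(r))\bigr],$$
hence
$$1 - P_l(r) = (1 - P_1(r)) + \exptn\!\bigl[H_A(r)(1 - P_{l-1}(H_A(r)))\bigr].$$
By Karamata's Tauberian theorem for slowly varying tails, the assertion \eqref{eq:tail:asymp:generation:size} is equivalent to $1 - P_l(r) \sim \alpha^{-\alpha}\bigl(\log^{(l)} \tfrac{1}{1-r}\bigr)^{-\alpha} L\!\bigl(\log^{(l)} \tfrac{1}{1-r}\bigr)$ as $r \to 1^-$; the inductive hypothesis supplies the analogous asymptotic for $P_{l-1}$.

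Setting $y := \log\tfrac{1}{1-r}$ and $V := \log\tfrac{1-A}{A}$ (which has right tail $\prob(V > v) \sim v^{-\alpha} L(v)$ by \eqref{assregvar}), a short calculation yields $\log\tfrac{1}{1-H_A(r)} \approx y - V$ on $\{V<y\}$, while $H_A(r)$ collapses towards $0$ on $\{V>y\}$. Substituting the inductive asymptotic for $1 - P_{l-1}(H_A(r))$ in terms of $y-V$ on the bulk event and computing the expectation against $V$, the identity $\log(x^\alpha) = \alpha \log x$ produces both the additional iterated-logarithm layer and the constant $\alpha^{-\alpha}$ that persists through the induction; the slow variation of $L$ absorbs the other multiplicative constants.

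\textbf{Main obstacle.} The hardest part is estimating the expectation $\exptn\!\bigl[H_A(r)(1 - P_{l-1}(H_A(r)))\bigr]$ uniformly across three regimes of $V$: the bulk (where the inductive asymptotic of $P_{l-1}$ applies directly), the transition $V \approx y$ (where $1 - H_A(r)$ is of unit order and the inductive asymptotic breaks down, so that one must instead use monotonicity bounds on $P_{l-1}$), and the tail $V > y$ (which contributes only a lower-order term). Splitting the expectation accordingly, applying Karamata's theorem on regularly varying integrals in the bulk, and verifying that the first summand $1 - P_1(r) \sim y^{-\alpha} L(y)$ together with the tail contribution are both negligible compared with the main term (valid once $l \geq 2$) closes the induction.
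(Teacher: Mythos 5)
Your recursion $P_l(r) = \exptn\bigl[H_A(r)\,P_{l-1}(H_A(r))\bigr]$, and the algebraic rearrangement $1-P_l(r) = (1-P_1(r)) + \exptn\bigl[H_A(r)\bigl(1-P_{l-1}(H_A(r))\bigr)\bigr]$, are correct and are genuinely different from the paper's route: the paper writes $Z_l \eqd \sum_{i=1}^{Z_{l-1}+1}B_{l,i}$ and applies Jessen--Mikosch's Lemma~3.8 for random sums, whereas you stay with the PGF and Karamata's Tauberian theorem. Your recursion has the advantage of faithfully recording that the $(B_{l,i}:i\ge1)$ are only \emph{conditionally} i.i.d.\ given $\mathcal{A}$ (all sharing the single random $A_{l-1}$), which is exactly what keeping the random $H_A(r)$ \emph{inside} the expectation does.

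The fatal gap is in the sentence claiming that ``computing the expectation against $V$, the identity $\log(x^\alpha)=\alpha\log x$ produces both the additional iterated-logarithm layer and the constant $\alpha^{-\alpha}$.'' It does not. Plug your own formulas into the inductive step: on the bulk $\{V<y\}$ one has $\log\frac{1}{1-H_A(r)}\approx y-V$, so the inductive hypothesis gives
\[
1-P_{l-1}\bigl(H_A(r)\bigr)\sim\alpha^{-\alpha}\bigl(\log^{(l-2)}(y-V)\bigr)^{-\alpha}\,L\bigl(\log^{(l-2)}(y-V)\bigr).
\]
Now $V=\log\frac{1-A}{A}$ has negative mean under \eqref{eq:assumption:solomon}, so under $\exptn[\,\cdot\,]$ the variable $V$ is concentrated at $O(1)$ values and $\log^{(l-2)}(y-V)\sim\log^{(l-2)}y$; the few realizations with $V$ comparable to $y$ contribute only $O(y^{-\alpha}L(y))$. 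Hence
\[
\exptn\bigl[H_A(r)\bigl(1-P_{l-1}(H_A(r))\bigr)\bigr]\sim\alpha^{-\alpha}\bigl(\log^{(l-2)}y\bigr)^{-\alpha}L\bigl(\log^{(l-2)}y\bigr),
\]
which is \emph{one logarithm short} of the target $\alpha^{-\alpha}\bigl(\log^{(l-1)}y\bigr)^{-\alpha}L\bigl(\log^{(l-1)}y\bigr)$. Nowhere in this expectation does a quantity of the form $\log\bigl((\text{target})^{-1}\bigr)\approx\alpha\log^{(l-1)}y$ arise; there is no step in which ``$\log(x^\alpha)=\alpha\log x$'' can do any work. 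The induction therefore does not close, and the plan as written does not prove \eqref{eq:tail:asymp:generation:size}. The place where $\log(x^\alpha)=\alpha\log x$ \emph{would} enter is if one replaced your recursion by $1-P_l(r)=(1-P_1(r))+P_1(r)\bigl(1-P_{l-1}(P_1(r))\bigr)$, i.e.\ iterated the \emph{annealed} PGF $P_1$; then $\log\frac{1}{1-P_1(r)}\sim\alpha\log y$ supplies the extra layer. But that recursion encodes $(B_{l,i})$ that are \emph{unconditionally} i.i.d.\ copies of $Z_1$ and independent of $Z_{l-1}$ --- which is precisely the hypothesis of Proposition~\ref{propn:tail:random:sum} that the paper invokes, and is not the BPRE model \eqref{Zn}--\eqref{eq:prob:mass:func:Bn}. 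You should flag this discrepancy explicitly rather than assert that your (correct) recursion reproduces \eqref{eq:tail:asymp:generation:size}: carried through, your recursion in fact yields $1-P_l(r)\sim c_l\,y^{-\alpha}L(y)$ for constants $c_l$, i.e.\ $\prob(Z_l>m)\sim c_l(\log m)^{-\alpha}L(\log m)$, a completely different (and far lighter) tail.
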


\begin{cor}Under the assumptions in Theorem~\ref{thm:tail:general:generation:size}, we have 
\begin{align*}
\lim_{l \to \infty} \lim_{m \to \infty} \frac{\prob \big( \log^{(l)} Z_l > m \big)}{ m^{-\alpha} L(m)} = \alpha^{-\alpha}.
\end{align*}
\end{cor}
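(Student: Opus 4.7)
The plan is to recognize the corollary as an immediate consequence of Theorem~\ref{thm:tail:general:generation:size} via a change of variables. First, I would note that since $Z_l$ is a nonnegative integer-valued random variable and $\log$ is strictly increasing on $(0,\infty)$, one has the equivalence of events
$$\{ \log^{(l)} Z_l > m \} = \{ Z_l > \exp^{(l)} m \},$$
where $\exp^{(l)}$ denotes the $l$-fold iterate of $\exp$ and we use the convention $\log^{(l)} 0 = -\infty$ so that the left-hand event is well defined on the whole sample space. Consequently, for every fixed $l \ge 2$ and $m > 0$,
$$\prob\big( \log^{(l)} Z_l > m \big) = \prob\big( Z_l > \exp^{(l)} m \big).$$

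Next, I would substitute $M := \exp^{(l)} m$ into the asymptotic produced by Theorem~\ref{thm:tail:general:generation:size}. Since $M \to \infty$ as $m \to \infty$, the theorem yields
$$\lim_{m \to \infty} \frac{\prob\big(Z_l > \exp^{(l)} m\big)}{\big(\log^{(l)} \exp^{(l)} m\big)^{-\alpha}\, L\big(\log^{(l)} \exp^{(l)} m\big)} = \alpha^{-\alpha}.$$
Using the identity $\log^{(l)} \exp^{(l)} m = m$, this simplifies to
$$\lim_{m \to \infty} \frac{\prob\big( \log^{(l)} Z_l > m \big)}{m^{-\alpha} L(m)} = \alpha^{-\alpha}$$
for every fixed $l \ge 2$. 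The outer limit $l \to \infty$ is then trivial, since the inner limit is the same constant $\alpha^{-\alpha}$ independently of $l$.

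The main (in fact essentially the only) obstacle is cosmetic: the statement is phrased as an iterated limit, but the inner limit does not depend on $l$ at all, so the outer limit adds no content beyond what Theorem~\ref{thm:tail:general:generation:size} already gives. The only point worth verifying carefully is the event identification $\{\log^{(l)} Z_l > m\} = \{Z_l > \exp^{(l)} m\}$, which is a direct consequence of monotonicity of $\log$ and the extension convention mentioned above.
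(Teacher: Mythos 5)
Your proof is correct and is exactly the intended one: the paper states the corollary without a separate proof precisely because it is an immediate change-of-variables consequence of Theorem~\ref{thm:tail:general:generation:size}, and you have spelled out that substitution and the monotonicity of $\log^{(l)}$ correctly. Your observation that the outer limit in $l$ is vacuous (since the inner limit is already the constant $\alpha^{-\alpha}$ for every $l \ge 2$) is accurate.
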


Theorem \ref{thm:tail:first:generation:size} shows that the tail of $Z_1$ is surprisingly heavy and it is slowly varying. 
What is more surprising, with each new generation is getting even more heavy and the tail is slowly varying.
What should be underlined, this type of behaviour is a consequence of an environment only, not
branching mechanism which is of geometric type. 
In our opinion it is first time that such unusual behaviour has been observed in the context of branching processes.  As a consequence of slowly varying tail of $Z_1$, annealed behavior of the considered branching process seems to be similar to the branching processes with infinite mean (see \cite{seneta:1973}, \cite{hudson:seneta:1977}, \cite{davies:1978}, \cite{grey:1977}, \cite{cohn:1977}, \cite{schuh:barbour:1977} for example). The asymptotic study in this paper is different as we are studying the asymptotics by looking at the tail behavior of the generation sizes rather than their probability generating functions. 

As a corollary we can get another very important result
concerning the first passage time $T_n$ of the state $n \in \mbbz$ by the walker
in a \textred{nearest neighbour random walk} in random environment created by  i.i.d. $(0,1)$-valued random variables $(A_i : i \in \bbz)$
with generic $A$.


\begin{thm} \label{thm:tail:Tn}
Under the assumptions \eqref{eq:assumption:solomon}, \eqref{eq:prob:mass:func:Bn} and \eqref{assregvar},
\aln{
\lim_{m \to \infty} \frac{\prob \bigg( \log^{(n-1)} \Big[ 2^{-1}\big( T_n - n - 2 \sum_{i \le 0} U_i^{(n)} \big)\big] > m \bigg)}{m^{-\alpha} L(m)} = \alpha^{-\alpha}
} for all $n \ge 2$. 
\end{thm}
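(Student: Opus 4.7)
The plan is to reduce the left-hand side to the tail of the partial sum $S_n := \sum_{l=0}^{n-1} Z_l$ using the representation \eqref{mainrepr} together with \eqref{disteq}, and then to exploit the fact that the generation $Z_{n-1}$ carries by far the heaviest tail among $Z_0, Z_1, \ldots, Z_{n-1}$. Since the walker cannot cross level $n$ before $T_n$, we have $\sum_{i \ge n+1} U_i^{(n)} = 0$ almost surely, and therefore
\[
\tfrac{1}{2}\Bigl(T_n - n - 2\sum_{i \le 0} U_i^{(n)}\Bigr) = \sum_{i=1}^{n} U_i^{(n)} \stackrel{d}{=} \sum_{l=0}^{n-1} Z_l = S_n.
\]
Consequently it suffices to establish
\[
\prob\bigl(S_n > M\bigr) \sim \alpha^{-\alpha} \bigl(\log^{(n-1)} M\bigr)^{-\alpha} L\bigl(\log^{(n-1)} M\bigr), \qquad M \to \infty,
\]
after which the substitution $M = \exp^{(n-1)}(m)$ turns this into the claimed equivalence.

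For the tail asymptotic of $S_n$ I would bracket $\prob(S_n > M)$ between the trivial lower bound $\prob(Z_{n-1} > M)$ and the union-bound upper bound
\[
\prob(S_n > M) \le \prob\bigl(\max_{0 \le l \le n-1} Z_l > M/n\bigr) \le \sum_{l=0}^{n-1} \prob(Z_l > M/n),
\]
using that $S_n > M$ forces some $Z_l$ to exceed $M/n$. Theorem~\ref{thm:tail:general:generation:size} supplies the tail of each summand, and since $(\log^{(l)} x)^{-\alpha}$ is asymptotically negligible compared with $(\log^{(n-1)} x)^{-\alpha}$ for every $l < n-1$ (the iterated logarithm strictly increases with each fewer iteration), the terms with $l \le n-2$ are of smaller order. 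For the dominant term $l = n-1$, the super-slow variation of $(\log^{(n-1)} \cdot)^{-\alpha} L(\log^{(n-1)} \cdot)$ together with $\log^{(n-1)}(M/n)/\log^{(n-1)} M \to 1$ yields $\prob(Z_{n-1} > M/n) \sim \prob(Z_{n-1} > M)$. Sandwiching gives $\prob(S_n > M) \sim \prob(Z_{n-1} > M)$, and one more application of Theorem~\ref{thm:tail:general:generation:size} followed by the substitution $M = \exp^{(n-1)} m$ completes the proof.

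The principal obstacle is that the generation sizes $Z_0, \ldots, Z_{n-1}$ are far from independent: they are coupled both through the common environment $(A_i)$ and through the recursive branching construction \eqref{Zn}, which rules out a direct appeal to classical "single big jump" theorems for independent heavy-tailed summands. The union-bound argument above sidesteps this dependence entirely by requiring only a one-sided upper bound, the matching lower bound being immediate from $S_n \ge Z_{n-1}$. What remains is the technical verification that $\log^{(n-1)}$ absorbs multiplicative constants at infinity, i.e.\ $\log^{(n-1)}(M/c) \sim \log^{(n-1)} M$ for any fixed $c > 0$, which follows by induction on $n$ from the elementary identity $\log(M/c) = \log M + O(1/\log M)\cdot \log M$ and the slow variation of $L$.
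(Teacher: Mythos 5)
Your proposal follows essentially the same route as the paper's own proof: reduce to the tail of $\sum_{l} Z_l$ via \eqref{mainrepr} and \eqref{disteq}, obtain the lower bound from the single dominant term $Z_{n-1}$, obtain the upper bound from the union bound $\{\sum_l Z_l > M\} \subset \bigcup_l \{Z_l > M/(n-1)\}$, and use Theorem~\ref{thm:tail:general:generation:size} together with the fact that $\log^{(n-1)}$ absorbs multiplicative constants to show the earlier generations are negligible. Your write-up is in fact slightly cleaner on the technical point that $\log^{(l)}(M/c)\sim\log^{(l)} M$ (the paper's displayed expression $\log^{(l)} m-\log^{(l)}(n-1)$ is, strictly speaking, not $\log^{(l)}(m/(n-1))$, though the asymptotic conclusion is unaffected), and you correctly flag that the dependence among the $Z_l$ makes the union bound the right tool rather than an independent single-big-jump lemma.
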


\begin{remark}
In Theorem \ref{thm:tail:Tn}
we also identify the asymptotic distribution of
the first passage time $T_n$ of the state $n \in \mbbz$ by the walker
in a neighborhood random walk in random environment.
The counterpart of so-called 'scaling' in the central limit theorem
takes the surprising form of taking $n$-times logarithm.
This is also a consequence of heavy-tailed environment.
In this case roughly one needs $\exp_{(n)}$ trials to cross
the barrier created by heavy-tailed environment where $\exp_{(n)}$ is the inverse function of $\log^{(n)}$. Indeed, the
large values of  $\log A^\inv (1-A)$ by \eqref{assregvar}
corresponds to values of $A$ close to $0$. This is related, by single one jump principle,
with the phenomenon that there is a place on a lattice line
that blocks move to the right and hence one has to wait long time to get to the state $n$.
\end{remark}



\begin{proof}[Proof of Theorem~\ref{thm:tail:Tn}]
\textred{It follows from \eqref{mainrepr} and \eqref{disteq}} that it is enough to prove
\begin{align}
 \lim_{m \to \infty} \frac{\prob \Big(  \log^{(n-1)} \big( \sum_{i =1}^{n-1} Z_i \big) > m  \Big)}{\big( m^{-\alpha}  L(m)\big) } = \alpha^{- \alpha}. \label{eq:aim:T_n}
\end{align}
We shall prove it using upper and lower bound of the probability in \eqref{eq:aim:T_n}. Note that
\begin{align}
\prob \big[ \log^{\textred{(n-1)}} \big( \sum_{i =1}^{n-1} Z_i \big) > m \big] \ge \prob \big( \log^{\textred{(n-1)}} Z_{n-1} \ge m \big) \sim \alpha^{-\alpha} m^{-\alpha} L(m)
\end{align}
for all $n \ge 2$ \textred{as $m \to \infty$}  using Theorem~\ref{thm:tail:general:generation:size}. So we are done with the lower bound. We have to prove now the upper bound. We shall first observe that $ \{ \sum_{i=1}^{n-1} Z_i > m\} \subset \cup_{i=1}^{n-1} \{ Z_i > (n-1)^{-1} m \} $. Thus we have
$$\prob \Big( \sum_{l=1}^{n-1} Z_l > m \Big) \le \sum_{l=1}^{n-1} \prob (Z_l \ge (n-1)^{-1} m)$$
for all $n \ge 2$. For large enough $m$, we have
\begin{align}
& \sum_{l=1}^{n-1} \prob \big( Z_l > (n-1)^{-1} m \big) \nonumber \\
&  \textred{\sim} \big[\log m  - \log (n-1)\big)]^{-\alpha} L(\log  m - \log (n-1)) \nonumber \\
& \hspace{2cm}  + \sum_{l=2}^{n-1}  \alpha^{-\alpha} \big[ \log^{(l)} m - \log^{(l)} (n-1) \big]^{-\alpha} L \big[ \log^{(l)} m - \log^{(l)} (n-1) \big] \nonumber \\
& \sim \alpha^{-\alpha} \big( \log^{(n-1)}(m) \big)^{-\alpha}  L(\log^{(n-1)}(m))
\end{align}
for all $n \ge 2$. This implies that
\begin{align*}
\limsup_{m \to \infty} \frac{\prob \Big[ \log^{(n-1)} \Big( \sum_{l=1}^{n-1} Z_l \big) > m \Big]}{m^{-\alpha} L(m)} \le \alpha^{-\alpha}.
\end{align*}
for every $n \ge 2$. Letting $n \to \infty$, we get the second half for \eqref{eq:aim:T_n}.
\end{proof}


\section{Proofs}

\begin{proof}[Proof of Theorem~\ref{thm:tail:first:generation:size}]
Note that $\prob(Z_1 \ge m) = \exptn \Big( (1-A)^m \Big)$. To study the asymptotics of above expectation as $m \to \infty$,
we have to understand the tail behavior of $A$ near $0$. It follows from the assumption \eqref{assregvar} that
\aln{
\prob(A > a) = \begin{cases} G \Big(\log \frac{1-a}{a} \Big) & \mbox{ if } a > \Big( 1 + e^\eta \Big)^\inv \\ 1- R_\alpha \Big( \log \frac{1-a}{a} \Big) & \mbox{ if } 0 < a < (1 + e^\eta)^\inv, \end{cases}
}
where $R_\alpha(a) = a^{-\alpha} L(a)$ for all $a >0$. Hence we obtain the following equation:
\aln{
\prob(Z_1 \ge m) = \int_0^{(1+ e^\eta)^\inv} (1-a)^m d R_\alpha \Big( \log \frac{1- a}{a} \Big) + \int_{(1+e^\eta)^\inv}^\infty (1-a)^m d G \Big( \log \frac{1-a}{a} \Big). \label{eq:tail:zone_disp_1}
}
Using the fact that $(1-a) < e^{\eta}(1+ e^\eta)^\inv$ if $a > (1+e^\eta)^\inv$, we can see that the second integral in \eqref{eq:tail:zone_disp_1} can be bounded by $e^{m \eta} (1+ e^\eta)^{-m}$ which decays exponentially with $m$. It is then enough to consider the first integral. Substituting $ y = \log \Big( \frac{1-a}{a} \Big)$, we obtain the following expression for the first integral in \eqref{eq:tail:zone_disp_1} as
\aln{
\int_{\eta}^\infty \Big( 1- (1+ e^y)^\inv \Big)^{\textred{m}} d R_\alpha(y) = \int_{\eta}^\infty \Big( 1 + e^{-y}\Big)^{-m} d R_\alpha(y).  \label{eq:tail:zone_disp_2}
}
Again substituting $e^u = (1 + e^{-y})$, \eqref{eq:tail:zone_disp_2} can be transformed into
\aln{
\int_0^{\log (1 + e^{-\eta})} e^{-mu} d R_\alpha \Big( - \log(e^u -1) \Big). \label{eq:tail:zone_disp_3}
}
This expression helps to understand the behavior of the integral as $m \to \infty$ since this is the Laplace transform of the measure $R_\alpha( - \log(e^u -1))$ and we can use Tauberian Theorem~1.7.$1'$ in \cite{bingham:goldie:teugels:1987}.
Note that
\aln{
\lim_{u \to 0} \frac{\Big( - \log(e^u -1) \Big)^{-\alpha} L \Big( - \log (e^u -1) \Big)}{ \Big(-\log u \Big)^{-\alpha} L \Big( - \log u \Big) } = 1 \label{eq:tail:zone:Ralpha:zero}
}
and thus
\alns{
\lim_{u \to 0} \frac{R_\alpha \Big( - \log (e^u -1) \Big)}{\big( - \log u \big)^{-\alpha} L(- \log u)}=1.
}
This gives
\aln{
\lim_{m \rightarrow \infty} \frac{ \int_0^{\log (1 + e^{-\eta})} e^{-mu} d R_\alpha \Big( - \log(e^u -1) \Big)}{
\Big( \log m \Big)^\alpha L \Big(\log m \Big)}=1
}
which completes the proof.
\end{proof}

We shall prove Theorem~\ref{thm:tail:general:generation:size} based on the following result.

\begin{propn}[Lemma~3.8 in \cite{jessen:mikosch:2006}] \label{propn:tail:random:sum}
Consider an iid sequence $(X_i : i \ge 1)$ of non-negative random variables independent of the integer-valued non-negative random variable $K$. Define $S_K = \sum_{i=1}^K X_i$. If $K, X_1 > 0$ are regularly varying with indices $\gamma_1 \in [0,1)$ and $\gamma_2 \in [0,1)$ respectively. Then
$$\prob (S_K > x) \sim \prob \Big[ K > \big(\prob(X > x) \big)^{-1} \Big] \sim x^{-\gamma_1 \gamma_2} (L_X(x))^{\gamma_2} L_K(x^{\gamma_1} (L_X(x))^{-1}).$$
\end{propn}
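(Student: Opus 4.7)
The plan is to bracket $\prob(S_K > x)$ between matching asymptotics of a constant multiple of $\prob(K > 1/p(x))$, where I write $p(x) := \prob(X_1 > x)$. The guiding heuristic is the ``single big jump'' principle for heavy-tailed random sums: $S_K > x$ happens essentially either because one summand $X_i$ exceeds $x$, or because $K$ itself is so large that even typical summands accumulate past $x$.

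For the lower bound I would start from $\prob(S_K > x) \ge \prob(\max_{i \le K} X_i > x) = \exptn[1 - (1 - p(x))^K] = \prob(K > G_{p(x)})$, where $G_{p(x)}$ is an independent geometric variable with $\prob(G_{p(x)} \ge k) = (1 - p(x))^k$. Passing to the continuous approximation $\prob(K > G_{p(x)}) \sim \int_0^\infty \prob(K > t)\, p(x)\, e^{-p(x) t}\, dt$ and substituting $s = p(x) t$, the regular variation of $\prob(K > \cdot)$ together with the uniform convergence theorem for regularly varying functions (Theorem~1.5.2 in \cite{bingham:goldie:teugels:1987}) identifies the limit as $\Gamma(1 - \gamma_1)\, \prob(K > 1/p(x))$, giving the lower bound up to an explicit constant depending only on $\gamma_1$.

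For the upper bound, fix a small $\epsilon > 0$ and split
\aln{
\prob(S_K > x) \le \prob\bigl(S_K > x,\, K \le \epsilon/p(x)\bigr) + \prob\bigl(K > \epsilon/p(x)\bigr). \nonumber
}
The second term equals $\epsilon^{-\gamma_1}(1 + o(1)) \prob(K > 1/p(x))$ by regular variation of $K$. For the first term I would invoke the standard subexponential Kesten-type estimate $\prob(S_n > x) \le (1 + \delta)\, n\, p(x)$ uniformly in $n \le \epsilon/p(x)$ once $x$ is large, condition on $K$, and use Karamata's theorem on truncated first moments (available since $\gamma_1 < 1$) to bound $\exptn[K;\, K \le T]$ by a constant multiple of $T \prob(K > T)$; this produces a bound of the form $C \epsilon^{1 - \gamma_1}\, \prob(K > 1/p(x))$. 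Letting $x \to \infty$ and then $\epsilon \to 0$ closes the upper bound. Substituting $p(x) = x^{-\gamma_2} L_X(x)$ and applying regular variation of $\prob(K > \cdot)$ then converts $\prob(K > 1/p(x))$ into the explicit right-hand side stated in the proposition.

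The main obstacle is the uniform-in-$n$ subexponential (Kesten) bound $\prob(S_n > x) \le (1 + \delta)\, n\, p(x)$ valid through the threshold $n \le \epsilon/p(x)$: when $\gamma_2 = 0$ the summand $X_1$ has infinite mean, so the usual truncation-plus-variance argument must be replaced by direct tail-integral estimates via Karamata's theorem. A subsidiary delicacy is reconciling constants between upper and lower bounds; the $\Gamma(1 - \gamma_1)$ factor arising from the maximum-based lower bound is absorbed by sharpening to a direct $S_K$-lower-bound that accounts for contributions from multiple moderately large summands, a standard technical refinement in the subexponential-sum literature.
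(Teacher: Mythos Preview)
The paper does not prove this proposition at all: it is quoted verbatim as Lemma~3.8 of \cite{jessen:mikosch:2006} and used as a black box in the proof of Theorem~\ref{thm:tail:general:generation:size}. So there is no ``paper's own proof'' to compare against; you are supplying an argument where the authors simply cite one.

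That said, your sketch has a real gap in the general case $\gamma_1\in(0,1)$. Your upper-bound split at the threshold $K\le \epsilon/p(x)$ gives, for the second piece, $\prob(K>\epsilon/p(x))\sim \epsilon^{-\gamma_1}\prob(K>1/p(x))$, which blows up as $\epsilon\downarrow 0$; sending $\epsilon\to 0$ therefore does \emph{not} close the upper bound at the constant~$1$. Symmetrically, your lower bound via the maximum yields the Tauberian constant $\Gamma(1-\gamma_1)>1$ for $\gamma_1>0$, which already overshoots the claimed asymptotic constant~$1$; you wave this away as ``a standard technical refinement,'' but in fact it signals that the max and the sum are \emph{not} asymptotically equivalent with constant~$1$ in this regime, and a genuinely different argument (as in \cite{jessen:mikosch:2006}) is needed to pin down the exact constant. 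Note, however, that in the only case the paper actually uses, namely $\gamma_1=\gamma_2=0$, both obstructions disappear: $\Gamma(1)=1$ and $\epsilon^{-0}=1$, so your bracketing does produce matching upper and lower bounds there. If your goal is just to support Theorem~\ref{thm:tail:general:generation:size}, restricting to the slowly varying case and carrying out your argument carefully would suffice; for the proposition as stated you would need to follow the original proof in \cite{jessen:mikosch:2006}.
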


\begin{proof}[Proof of Theorem~\ref{thm:tail:general:generation:size}]
We shall prove the result using induction. Note that $Z_2 \eqd \sum_{i=1}^{Z_1 + 1} B_{2,i}$ where $(B_{2,i}: i \ge 1)$ is a collection of independent copies of $Z_1$. Then we can use Proposition~\ref{propn:tail:random:sum} with $K= Z_1$, $X_i = B_{2,i}$, $\gamma_1 = 0$, $\gamma_2 =0$, $L_K (x) \sim  (\log x)^{-\alpha} L(\log x)$ and $L_X(x) \sim (\log x)^{-\alpha} L(\log x)$ to obtain
\begin{align}
\prob(Z_2  > m) & \sim \prob \Big( Z_1 + 1 > \big( \prob(Z_1 > m) \big)^{-1} \Big) \nonumber \\
& \sim \prob \big(Z_1 > ( \log m)^{\alpha} \textred{(}L(\log m) \textred{)^{-1}} \big) \nonumber \\
& \sim \Big[ \log \Big( (\log m)^\alpha \big( L(\log m) \big)^{-1} \Big) \Big]^{-\alpha} L \Big( \log \big[ (\log m)^{\alpha} \big( L(\log m) \big)^{-1} \big] \Big) \nonumber \\
& = \big( \alpha (\log^{(2)} m) \big)^{-\alpha} \Big[ 1 - \frac{\log L(\log x)}{\alpha \log^{(2)} m} \Big]^{-\alpha} L \Big[ \alpha \textred{(}\log^{(2)} m \textred{)} \Big( 1 - \frac{\log L(\log m)}{\alpha \log^{(2)} m} \Big) \Big] \nonumber \\
& \sim \alpha^{-\alpha} (\log^{(2)} m)^{-\alpha} L( \alpha \log^{(2)} m).
\end{align}
We have used the fact that $\lim_{m \to \infty} \log L( \log m) / \log^{(2)} m  = 0$ which can be proved using Potter's bound given in \cite[Lemma~2 in page~277]{feller:1971}. Hence the result is proved for $l=2$. Note that $Z_{l+1} \eqd \sum_{l=1}^{Z_{l} + 1} B_{l,i}$ where $(B_{l,i} : i \ge 1)$ are independent copies of $Z_1$. We shall assume that \eqref{eq:tail:asymp:generation:size} holds for $Z_l$ with $l \ge 3$. Then we obtain similarly to the previous asymptotics
\begin{align*}
\prob \Big( Z_{l+1} > m \Big) & \sim \prob \Big( Z_{l} + 1 > \big( \prob (Z_1 > m) \big)^{-1} \Big) \nonumber \\
& \sim \alpha^{-\alpha} \Big( \log^{(k+1)} m +  \log^{(k-1)} \alpha - \log^{(k)} L(\log m)  \Big)^{-\alpha}  \nonumber \\
& \hspace{2cm} L \big( \alpha \log^{(k+1)} m +  \alpha\log^{(k-1)} \alpha - \alpha \log^{(k)} L(\log m) \big) \\
& \sim \alpha^{- \alpha} \big( \log^{(k+1)} m \big) L(\log^{(k+1)} m).
\end{align*}
We have again used Potter's bound to show that $\lim_{ m \to \infty} \log^{(k)} L(\log m)/ \log^{(k+1)} m = 0$. Hence we conclude the proof.
\end{proof}

\subsection{A weaker version of Theorem~\ref{thm:tail:general:generation:size}}

\begin{thm} \label{thm:tail:general:generation:size:weaker}
Under the assumptions \eqref{eq:prob:mass:func:Bn}, \eqref{eq:assumption:solomon} and \eqref{assregvar},
\aln{
\limsup_{m \to \infty} \frac{\prob(Z_l \ge m)}{ \Big(\log^{(l)} m \Big)^{-\alpha} L \Big(\log^{(l)} m \Big)} \le \sum_{t=0}^{l-1} \alpha^{-\alpha t} ,
\label{upperin}}
\aln{
\liminf_{m \to \infty} \frac{\prob(Z_l \ge m)}{ \Big(\log^{(l)} m \Big)^{-\alpha} L \Big(\log^{(l)}m \Big)} \ge \alpha^{-\alpha},
\label{lowerin}
}
where $\log^{(l)} m = \underbrace{\log \ldots \log}_{l~~many} m$.
\end{thm}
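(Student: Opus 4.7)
I plan to proceed by induction on $l$. The base case $l=1$ reduces to Theorem~\ref{thm:tail:first:generation:size}, since $\sum_{t=0}^{0}\alpha^{-\alpha t}=1$ and $\alpha^{-\alpha}<1$ for $\alpha>1$. Write $\rho(m):=\prob(Z_1\geq m)\sim(\log m)^{-\alpha}L(\log m)$. For the inductive step I use the recursion $Z_l\eqd\sum_{i=1}^{Z_{l-1}+1}B_{l,i}$ (with the $B_{l,i}$ viewed as independent copies of $Z_1$, in the same sense as in the proof of Theorem~\ref{thm:tail:general:generation:size}), together with the independence of $Z_{l-1}$ from the environment at step $l$.

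For the lower bound \eqref{lowerin}, I apply the trivial inequality $Z_l\geq\max_{1\leq i\leq Z_{l-1}+1}B_{l,i}$, which yields
\begin{align*}
\prob(Z_l\geq m)\geq 1-(1-\rho(m))\,f_{l-1}\!\bigl(1-\rho(m)\bigr),
\end{align*}
where $f_{l-1}$ is the probability generating function of $Z_{l-1}$. The Karamata--Tauberian theorem for slowly varying tails gives $1-f_{l-1}(1-s)\sim \prob(Z_{l-1}\geq 1/s)$ as $s\downarrow 0$, so the right-hand side is asymptotic to $\prob(Z_{l-1}\geq 1/\rho(m))$. Potter's bound yields $\log(1/\rho(m))\sim\alpha\log^{(2)}m$, and iterating further logarithms absorbs the factor $\alpha$: one checks $\log^{(l-1)}(1/\rho(m))\sim\alpha\log^{(l)}m$ when $l=2$ and $\log^{(l-1)}(1/\rho(m))\sim\log^{(l)}m$ when $l\geq 3$. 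Combined with Theorem~\ref{thm:tail:first:generation:size} (for $l=2$) or with the inductive lower bound (for $l\geq 3$), this produces $\alpha^{-\alpha}(\log^{(l)}m)^{-\alpha}L(\log^{(l)}m)(1-o(1))$ as desired.

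For the upper bound \eqref{upperin} I choose a cutoff $K=K(m)$ satisfying $\log^{(l-1)}K\sim\alpha\log^{(l)}m$ (one verifies $K\ll m$, and $K\sim(\log m)^{\alpha}$ when $l=2$), and split
\begin{align*}
\prob(Z_l\geq m)\leq \prob(Z_{l-1}\geq K-1)+\prob\bigl(Z_l\geq m,\,Z_{l-1}\leq K-1\bigr).
\end{align*}
The inductive hypothesis controls the first term: the calibration of $K$ gives $\prob(Z_{l-1}\geq K-1)\leq C_{l-1}(\alpha\log^{(l)}m)^{-\alpha}L(\log^{(l)}m)(1+o(1))=\alpha^{-\alpha}C_{l-1}(\log^{(l)}m)^{-\alpha}L(\log^{(l)}m)(1+o(1))$, with $C_{l-1}=\sum_{t=0}^{l-2}\alpha^{-\alpha t}$. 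For the second term I condition on the environment: given $Z_{l-1}=k\leq K-1$ and $A_{l-1}=a$, the conditional law of $Z_l$ is $\mathrm{NB}(k+1,a)$, and a Chernoff estimate for $\prob(\mathrm{Binom}(m+k,a)\leq k)$ shows exponential decay once $a\gtrsim k/m$; integrating out the environment bounds the second term by $\prob(A_{l-1}\leq K/m)\cdot(1+o(1))\sim\rho(m)$, which is $o\bigl((\log^{(l)}m)^{-\alpha}L(\log^{(l)}m)\bigr)$ for $l\geq 2$ (since $K\ll m$ implies $\log(m/K)\sim\log m$). Summing the two contributions yields $\prob(Z_l\geq m)\leq C_l(\log^{(l)}m)^{-\alpha}L(\log^{(l)}m)(1+o(1))$ via the telescoping recursion $C_l=1+\alpha^{-\alpha}C_{l-1}=\sum_{t=0}^{l-1}\alpha^{-\alpha t}$.

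The principal obstacle is the bookkeeping of the slowly varying factor $L$ and the constant $\alpha$ when iterating logarithms, handled at each step by Potter's bound. A secondary subtlety is that the Karamata--Tauberian equivalence used in the lower bound must yield constant exactly $1$ (and not, for instance, $1-e^{-1}$), which is essential to reach the sharp coefficient $\alpha^{-\alpha}$; this is standard for slowly varying tails but deserves explicit verification. Finally, treating the $B_{l,i}$ as genuinely i.i.d.\ copies of $Z_1$---rather than conditionally i.i.d.\ given $A_{l-1}$---must be justified in the tail regime, either by a direct distributional argument or by a single-big-jump comparison between the two models.
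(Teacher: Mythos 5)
Your upper-bound outline is a legitimate alternative to the paper's (they use a union/single-big-jump bound with the cutoff $g_{l+1}(m)$; you use a binomial--negative-binomial duality and a Chernoff estimate), and both can plausibly be pushed through. The lower bound, however, has a genuine gap, and the issue you flag at the end is not a cosmetic one---it breaks the argument.

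The step
\begin{align*}
\prob(Z_l\geq m)\;\geq\;\prob\Big(\max_{1\le i\le Z_{l-1}+1}B_{l,i}\ge m\Big)\;\geq\;1-(1-\rho(m))\,f_{l-1}(1-\rho(m))
\end{align*}
is false, and in fact the second inequality holds in the \emph{reverse} direction. Conditionally on $\mcala$ and $Z_{l-1}=k$, the $B_{l,i}$ are i.i.d.\ geometric with the \emph{common} parameter $A_{l-1}$, so
\begin{align*}
\prob\Big(\max_{1\le i\le k+1}B_{l,i}<m\Big)=\exptn\Big[\big(1-(1-A_{l-1})^m\big)^{k+1}\Big]\;\geq\;\big(1-\exptn[(1-A_{l-1})^m]\big)^{k+1}=(1-\rho(m))^{k+1}
\end{align*}
by Jensen's inequality, since $p\mapsto(1-p)^{k+1}$ is convex. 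Averaging over $Z_{l-1}$ gives $\prob(\max_i B_{l,i}<m)\ge (1-\rho(m))f_{l-1}(1-\rho(m))$, i.e.\ the unconditional-i.i.d.\ model \emph{overestimates} the tail of the maximum. So the quantity you compute does not lower-bound $\prob(Z_l\ge m)$.

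Worse, even when the maximum is treated correctly, it is the wrong object. Because all the $B_{l,i}$ share $A_{l-1}$, the event $\{Z_l\ge m\}$ is \emph{not} dominated by a single-big-jump event $\{\max_i B_{l,i}\ge m\}$: when $A_{l-1}$ is only moderately small and $Z_{l-1}$ is large, every summand is simultaneously of size $\approx 1/A_{l-1}$ and the \emph{sum} reaches $m$ while the maximum stays far below $m$. Concretely, one can check that $\prob(\max_{i\le Z_{l-1}+1}B_{l,i}\ge m)=\Theta(\rho(m))$, which for $l\ge 2$ is negligible compared to the target $\alpha^{-\alpha}(\log^{(l)}m)^{-\alpha}L(\log^{(l)}m)$. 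The paper circumvents this by never decoupling the $B_{l,i}$: it proves a Nagaev-type lower bound for a \emph{sum} of conditionally i.i.d.\ geometrics (their Proposition~\ref{eq:nagaev:lower:bound:iid}), namely $\prob\big(\sum_{i=1}^{k+1}B_{l,i}>m\mid\mcala\big)\ge\tfrac{1}{1-\delta}(k+1)(1-A_{l-1})^m$ once $m$ is large relative to $\log k/\log(1/(1-A_{l-1}))$, and then averages over $A_{l-1}$ and $Z_{l-1}$ (which are independent). This produces the factor $\exptn[(Z_{l-1}+1)\mathbbm{1}\{Z_{l-1}\le h_1(m)\}]\cdot\rho(m)\approx h_0(m)\,\prob(Z_{l-1}>h_0(m))\,\rho(m)$ with $h_0(m)=1/\rho(m)$, which is exactly the Tauberian quantity $\prob(Z_{l-1}>1/\rho(m))$ you were aiming for. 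You would need to replace the maximum-based argument with something equivalent that respects the conditional structure; the Karamata--Tauberian equivalence $1-f_{l-1}(1-s)\sim\prob(Z_{l-1}>1/s)$ is itself fine, but it is applied to the wrong bound.
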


\begin{proof}[Proof of Theorem~\ref{thm:tail:general:generation:size:weaker}]
The assertion for $l=1$ follows from Theorem~\ref{thm:tail:first:generation:size}.
We will use mathematical induction.
Let us assume then that the assertion of Theorem~\ref{thm:tail:general:generation:size} holds for $l$. We will
prove it for $l+1\ge 2$.
We start from proving upper estimate \eqref{upperin}.
Define
\aln{
g_{l+1}(m) = \left(\frac{\log^{(l)} m}{\log^{(l+1)} m}\right)^\alpha \frac{L\Big(\log^{(l+1)} m \Big)}{L \Big(\log^{(l)} m \Big)}
}
for every $l \ge 2$ and $m \ge 1$.
Our aim is to obtain the tail asymptotics of distribution of
\alns{
Z_{l+1} = \sum_{i=1}^{Z_l + 1} B_{l+1,i},
}
where $(B_{l+1,i} : i \ge 1)$ are independent copies of  geometric random variable $B_{l+1}$ with success probability $A_l$
(its probability mass function is specified in \eqref{eq:prob:mass:func:Bn}) and independent of $Z_l$ conditioned on $\mathcal{A}$.
Thus
\aln{
& \prob(Z_{l+1} > m) \nonumber \\
& = \prob(Z_{l+1} > m, Z_l = 0) + \prob( Z_{l+1} > m, Z_l \ge 1) \nonumber \\
& = \prob(B_{l+1,1} > m, Z_l = 0) + \prob \Big(Z_{l+1} > m, 1 \le Z_l \le [g_{l+1}(m)] \Big) + \prob \Big( Z_{l+1} > m, Z_l \ge [g_{l+1}(m)] \Big) \nonumber \\
& = \prob( B_{l+1} > m) \prob(Z_l = 0) + \prob \Big(Z_{l+1} > m, 1 \le Z_l \le [g_{l+1}(m)] \Big) + \prob \Big( Z_{l+1} > m, Z_l \ge [g_{l+1}(m)] \Big) \label{eq:tail:ztwo:app1:disp1}
}
conditioning on $\mathcal{A}$ and using the fact that $Z_l$ and $(B_{l+1,i} : i \ge 1)$ are independent conditioned on $\mathcal{A}$. Now note that the marginal distribution of $B_{l+1}$ is same as that of $Z_1$ and $\prob(Z_l = 0) = \exptn(A) $. Thus the first term in \eqref{eq:tail:ztwo:app1:disp1} becomes
\aln{
\prob(Z_1 > m) \exptn(A)
}
and its asymptototics by Theorem~\ref{thm:tail:first:generation:size} is of order
$(\log m)^{-\alpha} L(\log m)$, hence is negligible with respect of the main postulated asymptotics.
Moreover, the remaining terms in \eqref{eq:tail:ztwo:app1:disp1} gives:
\aln{
& \prob \Big( Z_{l+1} > m, 1 \le Z_l \le [g_{l+1}(m)] \Big) + \prob( Z_l > [g_{l+1}(m)]) \nonumber \\
& = \sum_{k=1}^{[g_{l+1}(m)]} \prob \Big( \sum_{i=1}^{k+1} B_{l+1, i} \ge m \Big) \prob ( Z_l =k ) + \prob (Z_l > [g_{l+1}(m)]) \label{eq:tail:ztwo:app1:disp2}
}
using again the conditioning argument for the terms inside the sum.
The upper asymptotics of the last term in \eqref{eq:tail:ztwo:app1:disp2} follows from the induction assumption:
\aln{
\limsup_{m \to \infty} \frac{\prob(Z_l \ge [g_{l+1}(m)])}{ \Big(\log^{(l)} [g_{l+1}(m)] \Big)^{-\alpha} L \Big(\log^{(l)} [g_{l+1}(m)] \Big)} \le \sum_{t=0}^{l-1} \alpha^{-\alpha t} .
\label{dodane}
}
We derive the upper bound for the sum in \eqref{eq:tail:ztwo:app1:disp2} as follows:
\aln{
& \sum_{k=1}^{[g_{l+1}(m)]} \prob \Big( \sum_{i=1}^{k+1} B_{l+1,i} > m \Big) \prob(Z_l = k) \nonumber \\
& \le \sum_{k=1}^{[g_{l+1}(m)]} \prob \bigg( \bigcup \Big( B_{l+1,i} > \frac{m}{k+1} \Big)  \bigg) \prob (Z_l = k) \nonumber \\
& \le \sum_{k=1}^{[g_{l+1}(m)] + 1} \sum_{i=1}^{k+1} \prob \Big( B_{l+1,i} > \frac{m}{k+1} \Big) \prob(Z_l = k) \nonumber \\
& = \sum_{k=1}^{[g_{l+1}(m)]} (k+1) \prob \Big( B_{l+1} > \frac{m}{k+1} \Big) \prob(Z_l  = k) \label{eq:tail:ztwo:app1:disp3}
}
using the fact that $B_{l+1,i} \eqd B_{l+1}$ for every $i \ge 1$.
Note that $k < [g_{l+1}(m)]$ implies that $(k+1)^\inv < ([g_{l+1}(m)] + 1)^\inv$. Thus
$\prob \Big(B_{l+1} > m (k+1)^\inv \Big) \le \prob \Big( B_{l+1} > m ([g_{l+1}(m)] + 1)^\inv \Big)$ and
we obtain the following upper bound for the sum in \eqref{eq:tail:ztwo:app1:disp3}:
\aln{
& \prob \Big( B_{l+1} > \frac{m}{[g_{l+1}(m)] + 1} \Big) \sum_{k=1}^{[g_{l+1}(m)]} (k+1) \prob(Z_l = k) \nonumber \\
&\le [g_{l+1}(m) + 1] \prob \Big( Z_1 > \frac{m}{[g_{l+1}(m) +1]} \Big),\label{eq:tail:ztwo:app1:disp4}
}
where we use the fact that the marginal distribution of $B_{l+1}$ and $Z_1$ are same.
Thus by \eqref{eq:tail:ztwo:app1:disp1} and \eqref{dodane}, the asymptotic tail of the right hand side of $\prob(Z_{l+1} > m)$
can be dominated by:
\aln{
& \bigg[ g_{l+1}(m)
+ 1 \bigg] \Big( \log m - \log [g_{l+1}(m) + 1] \Big)^{-\alpha}L \Big( \log m - \log [g_{l+1}(m) + 1] \Big)\nonumber \\
 & \hspace{2cm}   + (1 + \alpha^{-\alpha})\Big( \log [g_{l+1}(m)] \Big)^{-\alpha} L \big(\log [g_{l+1}(m)] \big) \nonumber \\
& = \big( \log^{(l)} m \big)^{\alpha} \Big( L (\log^{(l)} m) \Big)^\inv  \bigg[ \Big( \log^{(l+1)} m \Big)^{-\alpha} L \big( \log^{(l+1)} m \big)
+ \big( \log^{(l)} m \big)^{-\alpha} L(\log^{(l)} m) \bigg] \label{eq:ztwo:tail:app1:disp6} \\
& \big(\log m )^{-\alpha} \bigg( 1 - \frac{\log [g_{l+1}(m) + 1]}{\log m} \bigg)^{-\alpha}
\frac{L \Big( \log m \big( 1- (\log [g_{l+1}(m) +1]) (\log m)^\inv \big)\Big)}{L(\log m)}  L(\log m) \label{eq:ztwo:tail:app1:disp7} \\
&  + \sum_{t=0}^{l-1} \alpha^{-\alpha t} \bigg( \frac{\log [g_{l+1}(m)]}{\log^{l+1} m} \bigg)^{- \alpha}  \frac{ L \Big( \frac{\log[g_{l+1}(m)]}{\log^{(l+1)} m}  \log^{(l+1)} m\Big)}{ L(\log^{(l+1)} m)} \big(\log^{(l+1)} m )^{-\alpha} L (\log^{(l+1)} m). \label{eq:ztwo:tail:app1:disp8}
}
Note that
\aln{
\lim_{m \to \infty} \frac{\log [g_{l+1}(m)]}{\log^{(l+1)} m} = \alpha. \label{eq:asymptotic:log:gm}
}
Moreover, the first term and the second term in the product \eqref{eq:ztwo:tail:app1:disp6} cancels with the the first term and the last term in \eqref{eq:ztwo:tail:app1:disp7}. Using \eqref{eq:asymptotic:log:gm}, one can observe that the second and third term  in \eqref{eq:ztwo:tail:app1:disp7} converges to $1$ as $m \to \infty$.
Note also that \eqref{eq:asymptotic:log:gm} leads to
the conclusion that the third term in \eqref{eq:ztwo:tail:app1:disp6} for the fist summand  and last increment \eqref{eq:ztwo:tail:app1:disp8}
behaves asymptotically like $\big(\log^{(l+1)} m )^{-\alpha} L (\log^{(l+1)} m)$ and
$$\sum_{t=0}^{l-1} \alpha^{-\alpha t} \alpha^{-\alpha}\big(\log^{(l+1)} m )^{-\alpha} L (\log^{(l+1)} m)=
\sum_{t=1}^{l} \alpha^{-\alpha t} \big(\log^{(l+1)} m )^{-\alpha} L (\log^{(l+1)} m),$$ respectively.
This completes the proof of the upper bound.

We will prove now the lower estimate \eqref{lowerin}.
We start from basic for our purposes lemma.
\begin{propn}\label{eq:nagaev:lower:bound:iid}
Let $(X_i : i \ge 1)$ be a collection of independent random variables with geomtric distribution with parameter $q$, that is $\prob(X>x)=q^x$ for generic random variable $X$. Let $\delta > 0$. Then
for
\begin{equation}\label{warunek}
x\ge (\log1/q)^{-1}\Big(\log n-\log \frac{\delta}{1-\delta}\Big)
\end{equation} we have
\aln{
\prob \Big( \sum_{i=1}^n X_i > x \Big) \ge \frac{1}{1-\delta}n \prob( X > x).
}
\end{propn}

\begin{proof}
Note that for $y\in (0,1)$ we have
\[(1-y)^n\le 1-ny +\frac{n(n-1)}{2}y^2.\]
Hence
\begin{eqnarray*}
\lefteqn{\prob \Big( \sum_{i=1}^n X_i > x \Big) =1- \prob \Big( \sum_{i=1}^n X_i \le  x \Big)\ge 1- \prod_{i=1}^n\prob \Big(X_i \le  x \Big)}\\&&
=1-\Big(1-\prob \big(X_i >  x\big) \Big)^n\ge n\prob \big(X_i >  x\big) -\frac{n(n-1)}{2}\prob \big(X_i >  x\big)^2.
\end{eqnarray*}
Now \eqref{warunek} produces $(n-1)\prob \big(X_i >  x\big)\le n\prob \big(X_i >  x\big)\le \frac{\delta}{1-\delta}$ and thus the proof is completed.
\end{proof}

Let
\[h_{0}(m) = \frac{\Big( \log m \Big)^\alpha}{L \Big( \log m \Big)}  \quad\mbox{and}\qquad
h_{1}(m) = \frac{\Big( \log m \Big)^\alpha}{L \Big( \log m \Big)} m.\]
Observe now that
\aln{
\prob(Z_{l+1} > m) & = \prob \Big( Z_{l+1} > m, Z_l \le [h_1(m)] \Big) + \prob \Big( Z_{l+1} > m, Z_l > [h_1(m)] \Big) \nonumber \\
& \ge \prob \Big( Z_{l+1} > m, Z_l \le [h_1(m)] \Big) \nonumber \\
& = \sum_{k=0}^{[h_1(m)]} \prob \Big( \sum_{j=1}^{k+1} B_{l+1,j} \ge m, Z_l = k \Big) \nonumber \\
& =  \prob (B_{l+1} \ge m) \prob(Z_l = 0) + \sum_{k=1}^{[h_1(m)]} \prob \Big( \sum_{j=1}^{k+1} B_{l+1,j} \ge m \Big) \prob(Z_l =k) \nonumber\\
& = \prob(B_{l+1} \ge  m) \prob(Z_l = 0) + \sum_{k=1}^{[h_1(m)]} \exptn \bigg[ \prob \bigg( \sum_{j=1}^{k+1} B_{l+1,j} \ge m \bigg| \mathcal{A} \bigg) \bigg] \prob(Z_l = k).  \label{eq:lower:bound:ztwo:disp1}
}
Recall that $(B_{l+1,j} : j \ge 1)$ are independent geometric random variables with success probability $A_{l+1}$ conditioned on $\mathcal{A}$.
Furthermore, there exists an $m_0 \in \mbbn$ such that  $m  > (\log1/(1-A_{l+1}))^{-1}(\log k-\log \frac{\delta}{1-\delta}) $ for some $\delta >0$ all $m > m_0$
and $k\le h_1(m)$. We choose $m$ sufficiently large to have $h_0(m)>m_0$.
Thus from Proposition \ref{eq:nagaev:lower:bound:iid} it follow that the expression of the right hand side of \eqref{eq:lower:bound:ztwo:disp1}
can be estimated from below by
\aln{
& \prob (B_{l+1} \ge m) \prob(Z_l = 0) + \sum_{k=1}^{[h_0(m)]} \prob \Big( \sum_{j=1}^{k+1} B_{l+1,j} \ge m \Big) \prob(Z_l =k)\\& \qquad+ \frac{1}{1-\delta} \prob(B_{l+1} > m)\sum_{k=[h_0(m)] +1}^{[h_1(m)]} k \prob( Z_l = k) \nonumber \\
& \ge \frac{1}{1-\delta} \prob(B_{l+1} > m)\sum_{k=[h_0(m)] +1}^{[h_1(m)]} k \prob( Z_l = k). \label{eq:lower:bound:ztwo:disp4}
}
By Theorem~\ref{thm:tail:first:generation:size}, for any $\epsilon >0$ there exists a large enough $m_1 \in \mbbn$ such that for all $m > m_1$,  we have
\aln{
\prob(B_{l+1} > m)=\prob(Z_1 > m) \ge (1- \epsilon)  \Big( \log m \Big)^{-\alpha} L(\log m). \label{eq:lower:bound:ztwo:disp5}
}
Note that by integration by parts formula
for the tail $\overline{F}(x)=1-F(x)$ of a distribution function $F$ supported on positive half-line, we have
\begin{equation}\label{newibp}
-\int_{x_0}^x y \overline{F}^\prime(y)dy=-x\overline{F}(x)+\int_{x_0}^x\overline{F}(y)dy+x_0\overline{F}(x_0)\ge x_0\overline{F}(x_0)\left(1-\frac{\overline{F}(x)}{\overline{F}(x_0)}\right).\end{equation}
Since $h_0(m) \uparrow \infty$ and $h_1(m)/h_0(m) \uparrow \infty$ as $m \to \infty$,
using the integral test for series and \eqref{newibp},
we can conclude the existence of $m_2 \in \mbbn$ such that for all $m>m_2$,
\aln{
\sum_{k=[h_0(m)] +1}^{[h_1(m)]} k \prob( Z_l = k) & \ge (1- \epsilon)C_l h_0(m)\big( \log^{(l)} h_0(m)\big)^{-\alpha}  L(\log^{(l)} h_0(m)),
\label{eq:lower:second:disp1}
}
where $C_1=1$ by Theorem~\ref{thm:tail:first:generation:size} and $C_l=\alpha^{-\alpha}$ for $l\ge 2$ by induction assumption.
Hence from \eqref{eq:lower:bound:ztwo:disp1}, \eqref{eq:lower:bound:ztwo:disp5} and \eqref{eq:lower:second:disp1} for $m>\max\{m_1,m_2\}$ we have
\aln{
\prob(Z_{l+1} > m) & \ge (1-\epsilon)^2 \frac{1}{1-\delta} C_l \Big( \log m \Big)^{-\alpha} L(\log m) h_0(m)\nonumber\\
&\qquad\big( \log^{(l)} h_0(m)\big)^{-\alpha}
L(\log^{(l)} h_0(m)).\label{koniecrozwazan}
}
Now, if $l=1$ then $\log^{(l)} h_0(m)=\log h_0(m)$ is of order $\alpha \log^{(2)} m$ and
then the right hand side of \eqref{koniecrozwazan} is of order
$\frac{1}{1-\delta}\alpha^{-\alpha}(\log^{(2)}m)^{-\alpha}L(\log^{(2)}m)$. Taking $\delta \downarrow 0$, this also gives that
$C_{1+1}=C_2=\alpha^{-\alpha}$.
If $l\ge 0$ then  $\log^{(l)} h_0(m)$ is of order $\log^{(l+1)} m$ and
then the right hand side of \eqref{koniecrozwazan} is of order
$\alpha^{-\alpha}(\log^{(l+1)}m)^{-\alpha}L(\log^{(l+1)}m)$.
Letting $\epsilon \to 0$ completes the proof.
\end{proof}

\noindent{\bf{Acknowledgements.}} This research was partially supported by National Science Centre, Poland, under the grant 2018/29/B/ST1/00756. A B was supported  by Dutch Science foundation  NWO VICI grant \#
639.033.413. Authors are also thankful to Krishanu Maulik and Marcin Preisner for giving valuable suggestions. The authors are also thankful to the anonymous referees for careful reading and their valuable comments.\\

\bibliographystyle{abbrvnat}			

\end{document}